\numberwithin{equation}{section}
\newtheorem{theorem}{Theorem}[section]
\newtheorem{lemma}{Lemma}[section]
\newtheorem{defn}{Difinition}[section]
\newtheorem{problem}{problem}
\newcommand{\lcm}{\operatorname{lcm}}
\begin{document}

\begin{frontmatter}

\title{The length of the repeating decimal}

\author[a]{Siqiong Yao\corref{cor1}}
\ead{yaosiqiong@sjtu.edu.cn}

\author[b]{Akira Toyohara}

\cortext[cor1]{To whom correspondence should be addressed.}

\affiliation[a]{organization={SJTU-Yale Joint Center for Biostatistics and Data Science, Technical Center for Digital Medicine, National Center for Translational Medicine},%Department and Organization
            addressline={Shanghai Jiao Tong University}, 
            city={Shanghai},
            postcode={200240}, 
            state={Shanghai},
            country={China}}

\affiliation[b]{organization={Graduate School of Policy Science, Ritsumeikan University},%Department and Organization
            addressline={2-150 Iwakura-cho}, 
            city={Ibaraki City},
            postcode={567-8570}, 
            state={Osaka},
            country={Japan}}

\begin{abstract}
    This paper investigates the length of the repeating decimal part when a fraction is expressed in decimal form. 
    First, 
    it provides a detailed explanation of how to calculate the length of the repeating decimal 
    when the denominator of the fraction is a power of a prime number. 
    Then, by factorizing the denominator into its prime factors and determining the repeating decimal length for each prime factor, 
    the paper concludes that the overall repeating decimal length is the least common multiple of these lengths. 
    Furthermore, 
    it examines the conditions under which the repeating decimal length equals the denominator minus 1 and discusses 
    whether such fractions exist in infinite quantity. 
    This topic is connected to an unsolved problem posed by Gauss in the 18th century 
    and is also closely related to the important question of whether cyclic numbers exist in infinite quantity.
\end{abstract}

%%Research highlights
%\begin{highlights}
%\item This study provides a detailed explanation for the calculation of the length of the repeating decimal for fractions where the denominator is a product of prime powers.
%\item This topic is related to the unsolved problems posed by Gauss related to the periodicity of decimal expansions and the existence of cyclic numbers.
%\item This study has potential applications in various fields such as cryptography theory and computational mathematics.
%\end{highlights}

%% Keywords
\begin{keyword}
repeating decimal length \sep prime factorization \sep least common multiple

\MSC 11R04 \sep 11A41 \sep 11Y16 \sep 11A51 \sep 11A55

\end{keyword}

\end{frontmatter}

\section{Introduction}
The study of repeating decimals has been a topic of significant interest in number theory 
due to its deep connections with modular arithmetic, 
prime factorization, and cyclic numbers. 
When a rational number is expressed in decimal form, 
it either terminates or becomes periodic. 
The length of the repeating decimal, 
often called the repeating cycle, 
provides insight into the arithmetic structure of the fraction's denominator.

One of the earliest investigations into repeating decimals can be traced back to Carl Friedrich Gauss, 
who posed questions related to the periodicity of decimal expansions and the existence of cyclic numbers \cite{gauss1801disquisitiones}. 
These questions remain relevant in contemporary mathematics, 
particularly in understanding the properties of primes and their powers.

In this paper, 
we focus on determining the length of the repeating decimal for fractions where the denominator is a product of prime powers. 
This is closely related to modular arithmetic and the least common multiple of repeating lengths, 
as discussed in \cite{hardywright2008numbers} and \cite{niven1991numbers}. 
Our main result, Theorem 2.2, 
demonstrates that the repeating decimal length \( l_p \) of a denominator \( p \) is 
the least common multiple of the repeating lengths of its prime power components.

The tools and techniques used in this paper are inspired by classic number theory results, 
such as Fermat's Little Theorem and properties of modular arithmetic \cite{koblitz1994cryptography}. 
Furthermore, it builds upon results found in \cite{dickson1952history}, 
which provide a historical perspective on the connection between cyclic numbers and repeating decimals.

The results presented in this paper have significant applications across various fields:

\subsection*{Cryptography Theory} 
    The periodicity of repeating decimals informs the design of public key cryptography systems like RSA. 
    Our findings on predicting modular operation periodicity can enhance cryptographic system efficiency and provide theoretical foundations for key selection.
    
\subsection*{Computational Mathematics} 
    Our methods for calculating repeating decimal lengths improve numerical computation precision. 
    By predicting decimal expansion periodicity in advance, 
    computational resources can be optimized in high-precision scientific and engineering calculations.
    
\subsection*{Number-Theoretic Patterns} 
    The relationship between repeating decimals and prime numbers offers new approaches to unsolved problems, 
    including the infinite existence of cyclic numbers and connections to perfect and amicable numbers.
    
\subsection*{Education} 
    Repeating decimals provide concrete examples for teaching abstract number theory concepts. 
    Theorem \ref{thm:2.2} effectively demonstrates applications of prime factorization and least common multiples.
    
\subsection*{Computer Science} 
    Our results on periodicity have applications in hash function design, 
    pseudo-random number generation, and data compression algorithms for detecting and encoding periodic patterns.
    
\subsection*{Quantum Computing} 
    The properties of repeating decimals of prime powers provide theoretical foundations for quantum factorization algorithms like Shor's algorithm.
    
\subsection*{Financial Engineering} 
    Understanding fraction-to-decimal conversion helps minimize approximation errors in financial calculations and can inform periodicity detection in market analysis.
    
\subsection*{Scientific Modeling} 
    The periodicity principles explored here can be applied to modeling periodic phenomena in chaos theory and nonlinear dynamical systems.

\subsection*{}
In this paper, we define the following sets and notation:
\begin{itemize}
    \item \( \mathbb Z \): the set of integers
    \item \( \mathbb N \): the set of natural numbers
    \item \( \mathbb P \): the set of prime numbers
    \item \( \mathbb Q \): the set of rational numbers
    \item \( \lcm(p_1, p_2) \): the least common multiple of \( p_1 \) and \( p_2 \)
\end{itemize}

The paper is organized as follows. 
In Section 2, we introduce the fundamental definitions and main theorems concerning the length of repeating decimals, 
including the key results on prime powers and composite denominators. 
Section 3 is devoted to detailed proofs of these theorems, 
employing techniques from modular arithmetic and number theory. 
In Section 4, we provide practical examples demonstrating the application of our theorems to calculate the repeating decimal lengths for various numbers, 
showcasing the efficiency of our approach.
In Section 5, we discuss the implications of our results, 
explore connections to classical problems in number theory, 
and present several related unsolved problems, 
particularly those connected to Gauss's inquiries about cyclic numbers and their infinite existence.

\section{Preliminaries and main results}
\begin{defn}
    Let \( p, q \in \mathbb{N} \). We define \( l_{q/p} \) as the length of the repeating cycle of \(\displaystyle \frac{q}{p} \) 
    when \(\displaystyle \frac{q}{p} \) is a repeating decimal. 
    In the case that \(\displaystyle \frac{q}{p} \) is a terminating decimal, we define \( l_{q/p} = 1 \).
    In particular, when \( q = 1 \), we denote \( l_p = l_{q/p} \).
\label{def:l_p}\end{defn}

\begin{defn}
    Let \( p \in \mathbb{N} \). $A_p$ defined as
    \[
    A_p \stackrel{\text{def}}{=} \{ n \in \mathbb{N} \mid 10^n \equiv 1 \pmod{p} \}.
    \]
\label{def:A_p}\end{defn}

From Definition \ref{def:l_p}, it is clear that when 
\( q/p \) is an irreducible fraction,
\begin{equation}
l_{q/p} = l_p
\end{equation}
holds.

Moreover, the following fact is also straightforward:
\begin{equation}
    l_p = \min A_p,
\label{eq:minA_p}\end{equation}
where \( A_p \) is defined as in Definition \ref{def:A_p}.

\begin{defn}
    Let \( c_p \) denote the repeating decimal period of \( 1/p \).  
    For a terminating decimal \( 1/p \), we define \( c_p = 0 \).
\label{def:c_p}\end{defn}

If \( p \in \mathbb{P} \) and \( p \neq 2, p \neq 5 \), we have  
\[
\frac{1}{p} = \frac{c_p}{99\ldots9} = \frac{c_p}{10^{l_p} - 1}.
\]
Thus, we obtain  
\[
c_p = \frac{10^{l_p} - 1}{p}.
\]

\begin{lemma}
    Let \( p \in \mathbb{P} \). Then, the following statements hold:

    (1) \(\displaystyle p - 1 \in A_p \),

    (2) \(\displaystyle l_p \mid (p - 1) \).
\label{lemma:l_p|p-1}\end{lemma}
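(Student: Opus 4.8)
The plan is to deduce both statements from Fermat's Little Theorem together with the characterization $l_p = \min A_p$ from equation \eqref{eq:minA_p}. For part (1), I would first dispose of the exceptional cases $p = 2$ and $p = 5$: here $1/p$ is a terminating decimal, so by Definition \ref{def:l_p} we have $l_p = 1$, and one checks directly that $p - 1 \in A_p$ is \emph{not} quite the right reading — actually for these primes $10 \equiv 0 \pmod p$, so $A_p$ as literally defined is empty. I expect the intended hypothesis is $p \neq 2, 5$ (consistent with the discussion of $c_p$ just above the lemma), so I would state that assumption and proceed. For $p \notin \{2,5\}$, $\gcd(10, p) = 1$, so Fermat's Little Theorem gives $10^{p-1} \equiv 1 \pmod p$, which is exactly the statement that $p - 1 \in A_p$.

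For part (2), the key structural fact is that $A_p$ is closed under subtraction of its elements (when the difference is positive) and, more usefully, that $\min A_p$ divides every element of $A_p$. The cleanest route: let $l = l_p = \min A_p$ and let $m$ be any element of $A_p$; write $m = ql + r$ with $0 \le r < l$ by the division algorithm. Then $10^m = (10^l)^q \cdot 10^r \equiv 10^r \pmod p$, and since $10^m \equiv 1$, we get $10^r \equiv 1 \pmod p$. If $r > 0$ this contradicts minimality of $l$, so $r = 0$ and $l \mid m$. Applying this with $m = p - 1$ (which lies in $A_p$ by part (1)) yields $l_p \mid (p-1)$.

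I do not anticipate a serious obstacle here — the argument is the standard "order divides any exponent that works" lemma specialized to base $10$. The only delicate point is bookkeeping around the terminating-decimal conventions: I need to make sure the statement is interpreted with $p \neq 2, 5$ so that $A_p \neq \emptyset$ and $\min A_p$ is well-defined, and so that equation \eqref{eq:minA_p} genuinely identifies $l_p$ with $\min A_p$. With that caveat in place, the proof is two short paragraphs: invoke Fermat for (1), then the division-algorithm argument for (2).
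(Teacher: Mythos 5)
Your proof is correct and follows essentially the same route as the paper: Fermat's Little Theorem for (1) and the division-algorithm/minimality argument for (2). Your extra caveat about $p=2,5$ (where $10\equiv 0\pmod p$, so $A_p=\emptyset$ and the lemma must be read with $p\neq 2,5$) is a point the paper's proof silently glosses over, and it is worth keeping.
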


\begin{proof}
    (1) By Fermat's Little Theorem, we have
    \begin{equation}
        10^{p-1} \equiv 1 \pmod{p}.
    \label{eq:p-1inA_p}\end{equation}
    Thus, \( p - 1 \in A_p \), and (1) holds.
    
    (2) From Definition \ref{def:l_p}, we know
    \begin{equation}
        10^{l_p} \equiv 1 \pmod{p}.
    \end{equation}
    Let \( k \in \mathbb{N} \) and \( 0 \leq m < l_p \). Then, we can write
    \[
    p - 1 = l_p \cdot k + m.
    \]
    From Equations (\ref{eq:minA_p}) and (\ref{eq:p-1inA_p}), it follows that
    \[
    10^{l_p \cdot k + m} \equiv 10^m \equiv 1 \pmod{p}.
    \]
    
    If \( m \neq 0 \), then \( m \in A_p \). However, since \( m < l_p \), this contradicts Equation (\ref{eq:minA_p}).  
    Thus, \( m = 0 \), and (2) is proven.    
\end{proof}

\begin{theorem}
    Let \( p \in \mathbb{P} \) and \( p \geq 2 \).  
    There exists a non-negative integer \( m_p \) and a natural number \( k \) such that  
    \[
    c_p = p^{m_p} \cdot k \quad (p \nmid k),
    \]
    so that  
    \[
    l_{p^n} = 
    \begin{cases} 
    l_p & \text{if } n \leq m_p + 1, \\[2mm]
    p^{n - (m_p + 1)} \cdot l_p & \text{if } n > m_p + 1,
    \end{cases}
    \]
    where \( c_p \) and \( l_p \) are defined in 
    Definition \ref{def:c_p} and Definition \ref{def:l_p}, respectively.
\label{thm:2.1}
\end{theorem}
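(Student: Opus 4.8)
The plan is to recognize $l_{p^n}$ as the multiplicative order of $10$ modulo $p^n$ and then to control exactly how the $p$-adic valuation of $10^{l_p j}-1$ grows as $j$ ranges over powers of $p$. I first dispose of $p\in\{2,5\}$: there $1/p^n$ terminates, so $l_{p^n}=1$ for every $n$ by Definition~\ref{def:l_p}, and the statement is understood to concern $p\notin\{2,5\}$ (the condition $p\nmid k$ forces $c_p\neq 0$). So assume $p\notin\{2,5\}$. By \eqref{eq:minA_p}, $l_{p^n}=\min A_{p^n}$, the order of $10$ in $(\mathbb Z/p^n\mathbb Z)^{\times}$. A congruence modulo $p^n$ is in particular one modulo $p$, so $A_{p^n}\subseteq A_p$; and the same divisibility argument as in the proof of Lemma~\ref{lemma:l_p|p-1}(2) shows that $l_p$ divides every element of $A_p$, so in particular $l_p\mid l_{p^n}$. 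Write $l_{p^n}=l_p\cdot t_n$ with $t_n\in\mathbb N$; then $t_n$ is the least $t\in\mathbb N$ with $10^{l_p t}\equiv 1\pmod{p^n}$, and it suffices to prove $t_n=p^{\max(0,\,n-(m_p+1))}$.

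The arithmetic core is a ``lifting the exponent'' lemma for the odd prime $p$: if $10^{l_p}-1=p^{e}k$ with $e\ge 1$ and $p\nmid k$, then for every $j\ge 0$ one has $10^{l_p p^{j}}-1=p^{e+j}k_j$ with $p\nmid k_j$. I would prove this by induction on $j$, expanding $\bigl(1+p^{e+j}k_j\bigr)^{p}$ by the binomial theorem: the linear term equals $p^{e+j+1}k_j$; the quadratic term $\binom{p}{2}p^{2(e+j)}k_j^{2}$ is divisible by $p^{2(e+j)+1}$ — this is exactly where the oddness of $p$ is used, so that the $2$ in $\binom{p}{2}=\tfrac{p(p-1)}{2}$ cancels against $p-1$ — and every later term is divisible by a still higher power of $p$; since $2(e+j)+1>e+j+1$ whenever $e+j\ge 1$, the valuation goes up by exactly one while the cofactor stays a $p$-unit. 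I apply this with $e=m_p+1$, which is legitimate because $c_p=(10^{l_p}-1)/p$ (Definition~\ref{def:c_p}) and $c_p=p^{m_p}k$, $p\nmid k$, together give $10^{l_p}-1=p^{m_p+1}k$; hence $v_p\!\left(10^{l_p p^{j}}-1\right)=m_p+1+j$.

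Next I would handle a general exponent $t$. Writing $t=p^{s}u$ with $p\nmid u$ and combining the lemma with one more binomial expansion of $\bigl(1+p^{m_p+1+s}k_s\bigr)^{u}$ (the exponent $u$ being coprime to $p$), I obtain $v_p\!\left(10^{l_p t}-1\right)=m_p+1+s$ exactly. Hence $10^{l_p t}\equiv 1\pmod{p^n}$ if and only if $m_p+1+s\ge n$, i.e. $s\ge n-(m_p+1)$; the least $t\in\mathbb N$ whose $p$-adic valuation is at least $\max(0,\,n-(m_p+1))$ is $t=p^{\max(0,\,n-(m_p+1))}$. Therefore $t_n=p^{\max(0,\,n-(m_p+1))}$, which gives $l_{p^n}=l_p$ when $n\le m_p+1$ and $l_{p^n}=p^{\,n-(m_p+1)}\,l_p$ when $n>m_p+1$, as claimed.

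I expect the main obstacle to be the careful bookkeeping in the lifting-the-exponent step: establishing the \emph{exact} valuation of $10^{l_p p^{j}}-1$ rather than merely a lower bound, propagating the $p$-unit cofactors $k_j$ cleanly through the induction, and invoking $p\notin\{2,5\}$ at precisely the point where $\binom{p}{2}$ must supply an extra factor of $p$. Once that lemma is established, the remainder is the order-divisibility argument already present in Lemma~\ref{lemma:l_p|p-1}, together with $A_{p^n}\subseteq A_p$ and \eqref{eq:minA_p}.
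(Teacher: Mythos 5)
Your argument is correct, and it reaches Theorem \ref{thm:2.1} by a genuinely different route from the paper's. The paper proceeds stepwise in the exponent: it first shows $l_{p^{k+1}}\in\{\,l_{p^k},\ p\,l_{p^k}\,\}$ (Lemma \ref{lemma:l_p^n=l_pp^m}), then decides which alternative occurs according to whether $p\mid c_{p^k}$ and shows that $p\nmid c_{p^k}$ propagates to $p\nmid c_{p^{k+1}}$ (Lemma \ref{lemma:pn|c_p^k}), and finally runs an induction on $n$ starting at $n=m_p+1$, where it computes $c_{p^{m_p+1}}=k$. You instead prove a single exact-valuation statement: from $10^{l_p}-1=p^{m_p+1}k$ with $p\nmid k$, the lifting-the-exponent computation gives that the $p$-adic valuation of $10^{l_p t}-1$ equals $m_p+1+s$ where $p^s\,\|\,t$, and then you read off $\min A_{p^n}$ directly via \eqref{eq:minA_p}, obtaining both cases of the theorem uniformly rather than by a dichotomy plus induction. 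The underlying computation is the same binomial expansion that powers the paper's Lemma \ref{lemma:pn|c_p^k}(2) --- your unit cofactors $k_j$ play exactly the role of the repetends $c_{p^k}$ reduced modulo $p$ --- but your packaging buys two things: an exact valuation formula (so no separate argument is needed to rule out $l_{p^{k+1}}=l_{p^k}$ when $p\nmid c_{p^k}$, a step the paper justifies somewhat informally by saying ``the numerator is not an integer''), and a uniform treatment of all $n$ at once. You also make explicit the point, left implicit in the paper, that $l_p\mid l_{p^n}$ follows from $A_{p^n}\subseteq A_p$ together with the divisibility property of $\min A_p$, and that $p\in\{2,5\}$ must be excluded since there $c_p=0$ and no pair $(m_p,k)$ with $p\nmid k$ exists. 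What the paper's route buys in exchange is a closer contact with the decimal repetend $c_{p^k}$ itself, which is what motivates the parameter $m_p$ and the open problems about it in Section 5; your approach is the more standard order-theoretic one and is, if anything, tighter.
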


\begin{theorem}
    Let \( p \in \mathbb{N} \) have the prime factorization  
    \[
    p = \prod_{k=1}^n p_k^{m_k},
    \]
    where \( p_1, \ldots, p_n \in \mathbb{P} \) and \( m_1, \ldots, m_n \in \mathbb{N} \).  
    Then,  
    \[
    l_p = \lcm\{l_{p_1^{m_1}}, \ldots, l_{p_n^{m_n}}\}.
    \]
    \label{thm:2.2}
\end{theorem}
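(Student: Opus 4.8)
The plan is to reduce the statement to the set $A_p$ of Definition~\ref{def:A_p} together with the identity $l_p=\min A_p$ (Equation~(\ref{eq:minA_p})), and then to split the congruence condition over the prime-power factors using the Chinese Remainder Theorem. The one place that needs care is that $l_p=\min A_p$ is valid only when $\gcd(p,10)=1$: if $2\mid p$ or $5\mid p$ then $A_p=\varnothing$, and $1/p$ has a nontrivial pre-period. So I would first strip away the factors of $2$ and $5$.

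\textbf{Step 1: removing the base-$10$ part.} Write $p=2^{a}5^{b}N$ with $\gcd(N,10)=1$ and put $s=\max(a,b)$. Then $t:=10^{s}/(2^{a}5^{b})=2^{s-a}5^{s-b}$ is a positive integer coprime to $N$, and
\[
\frac1p=\frac{1}{10^{s}}\cdot\frac{t}{N}.
\]
Multiplying a decimal by $10^{-s}$ only shifts its digits by $s$ places and hence cannot change the length of the repeating block, so $l_p=l_{t/N}$; and since $t/N$ is irreducible, $l_{t/N}=l_N$ by the irreducible-fraction reduction recorded after Definition~\ref{def:A_p} (when $N=1$ both sides equal $1$, matching the terminating-decimal convention). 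On the right-hand side of the asserted formula, $l_{2^{a}}=l_{5^{b}}=1$, and an $\lcm$ is unchanged by inserting $1$'s; so it suffices to prove the formula under the extra hypothesis $\gcd(p,10)=1$, in which case every $p_k\notin\{2,5\}$ and $N=p$.

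\textbf{Step 2: the coprime case via CRT.} For any modulus $q$ with $\gcd(q,10)=1$ the set $A_q$ is nonempty (Euler's theorem) and, arguing exactly as in the proof of Lemma~\ref{lemma:l_p|p-1}(2) — write a given exponent $e$ as $e=l_q\, j+r$ with $0\le r<l_q$, note $10^{r}\equiv10^{e}\pmod q$, and use minimality of $l_q=\min A_q$ — one obtains
\[
A_q=l_q\,\mathbb N=\{\,l_q,\,2l_q,\,3l_q,\,\dots\,\}.
\]
I would then apply this to $q=p$ and to each $q=p_k^{m_k}$. By the Chinese Remainder Theorem, $10^{e}\equiv1\pmod p$ holds exactly when $10^{e}\equiv1\pmod{p_k^{m_k}}$ for every $k$, so
\[
A_p=\bigcap_{k=1}^{n}A_{p_k^{m_k}}=\bigcap_{k=1}^{n}l_{p_k^{m_k}}\,\mathbb N=\lcm\{l_{p_1^{m_1}},\dots,l_{p_n^{m_n}}\}\,\mathbb N,
\]
the last equality being the elementary fact that the common positive multiples of finitely many integers are precisely the multiples of their least common multiple. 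Taking $\min$ of both sides and applying Equation~(\ref{eq:minA_p}) once more gives $l_p=\lcm\{l_{p_1^{m_1}},\dots,l_{p_n^{m_n}}\}$, as required.

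\textbf{Where the difficulty lies.} Step 2 is essentially bookkeeping with multiplicative orders and the CRT and should go through smoothly once $l_p=\min A_p$ is in hand. The genuinely delicate part is Step 1: because $\min A_p$ does not describe $l_p$ when $p$ shares a factor with $10$, I must carefully isolate the $2$-and-$5$ part and show it affects neither side — it contributes only $1$'s to the least common multiple, and the finite pre-period it produces is irrelevant to the length of the repeating block. I would make the ``shift by $10^{-s}$ preserves the period length'' claim precise (for instance by tracking the remainders in the long-division algorithm, which eventually enter the same cycle) rather than leaving it implicit.
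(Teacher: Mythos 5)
Your proof is correct, and its engine is the same as the paper's: split the congruence $10^{e}\equiv1$ over the coprime prime-power factors and use minimality of the period. The packaging differs in two ways. First, the paper proves a two-factor statement (Lemma~\ref{lemma:l12=lcm12}: $l_{p_1^{n}p_2^{m}}=\lcm\{l_{p_1^{n}},l_{p_2^{m}}\}$) and then runs an induction on the number of prime factors, whereas you treat all factors at once through the set identity $A_p=\bigcap_k A_{p_k^{m_k}}=\lcm\{l_{p_1^{m_1}},\dots,l_{p_n^{m_n}}\}\,\mathbb N$ and take minima; this is a cleaner one-shot version of the same argument and avoids the slight awkwardness that the paper's inductive step applies a lemma stated for two prime powers to the pair $(p',p_n^{m_n})$ where $p'$ is composite (harmless, since only coprimality is used, but worth noting). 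Second, and more substantively, your Step~1 handles the factors $2$ and $5$ explicitly: the characterization $l_q=\min A_q$ of Equation~(\ref{eq:minA_p}) and the whole order-theoretic argument require $\gcd(q,10)=1$, since otherwise $A_q=\varnothing$ and $1/q$ has a pre-period. The paper's proof of Lemma~\ref{lemma:l12=lcm12} tacitly assumes this coprimality and never reduces the general case to it, even though Theorem~\ref{thm:2.2} as stated allows $p_k\in\{2,5\}$ (and the worked examples such as $l_{42}$ and $l_{90}$ use exactly that case). So your extra reduction — writing $p=2^{a}5^{b}N$, arguing that the shift by $10^{-s}$ does not affect the period length and that $l_{2^a}=l_{5^b}=1$ contribute only $1$'s to the lcm — is not redundant caution but fills a genuine gap left implicit in the paper; just make the shift-invariance of the period length precise, as you propose, via the remainders in the long-division algorithm.
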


\section{Proof of the Main Theorem}

\begin{lemma}
    For all \( n \in A_p \),
    \[
    \text{min} \, A_p \mid n
    \]
    holds.
\label{lemma:minA_p|n}\end{lemma}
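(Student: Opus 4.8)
The plan is to reproduce, in greater generality, the division-with-remainder argument already used in the proof of Lemma~\ref{lemma:l_p|p-1}(2), where the specific exponent $p-1$ is now replaced by an arbitrary element $n \in A_p$. First I would observe that the statement is only substantive when $A_p \neq \emptyset$; if $A_p = \emptyset$ there is nothing to prove, and this degenerate case occurs precisely when $\gcd(10,p) > 1$, since then $10^n$ is never coprime to $p$ and so never congruent to $1 \pmod p$. Assuming $A_p \neq \emptyset$ (which is exactly what the hypothesis ``for all $n \in A_p$'' tacitly provides), the well-ordering principle guarantees that $d \stackrel{\text{def}}{=} \min A_p$ exists, and by the definition of $A_p$ it satisfies $10^{d} \equiv 1 \pmod{p}$.

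Next I would fix an arbitrary $n \in A_p$ and carry out Euclidean division, writing $n = d\,q + r$ with $q$ a non-negative integer and $0 \le r < d$. Using $10^{d} \equiv 1 \pmod p$ one computes
\[
10^{n} = \left(10^{d}\right)^{q} \cdot 10^{r} \equiv 10^{r} \pmod{p}.
\]
Since $n \in A_p$ forces the left-hand side to be $\equiv 1 \pmod p$, this yields $10^{r} \equiv 1 \pmod{p}$. Finally I would rule out $r \neq 0$: if $r \ge 1$ then $r \in A_p$ with $r < d$, contradicting the minimality of $d = \min A_p$. Hence $r = 0$, which is precisely the assertion $\min A_p \mid n$.

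I do not anticipate a genuine obstacle here: the argument is entirely elementary, and the only point needing a little care is the bookkeeping around the boundary cases — confirming that $\min A_p$ is well-defined and noting that the claim is vacuous when $1/p$ is a terminating decimal with $p \neq 1$ (and also trivial for $p = 1$, where $A_p = \mathbb{N}$ and $\min A_p = 1$ divides everything). It is worth remarking that this lemma subsumes Lemma~\ref{lemma:l_p|p-1}(2): combining it with Fermat's Little Theorem in the form of Equation~(\ref{eq:p-1inA_p}), which places $p-1$ in $A_p$, immediately gives $l_p = \min A_p \mid (p-1)$. More generally, this divisibility property of $\min A_p$ is the structural fact I expect to lean on repeatedly when establishing the formulas in Theorems~\ref{thm:2.1} and~\ref{thm:2.2}.
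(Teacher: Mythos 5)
Your proposal is correct and follows essentially the same route as the paper: Euclidean division $n = (\min A_p)\,q + r$, the congruence $10^{n} \equiv 10^{r} \pmod{p}$, and minimality of $\min A_p$ forcing $r = 0$, exactly as in the paper's proof (which simply defers to the argument of Lemma~\ref{lemma:l_p|p-1}). Your extra remarks on the vacuous case $A_p = \emptyset$ and the trivial case $p = 1$ are sound but not needed for the paper's argument.
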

    
    \begin{proof}
    Let \( l = \text{min} \, A_p \).  
    By the definition of \( A_p \), since \( n \in A_p \), we have
    \[
    10^n \equiv 1 \pmod{p}.
    \]
    It follows that \( l \leq n \).
    
    Now, there exist a natural number \( k \) and an integer \( m \) such that
    \[
    n = kl + m \quad \text{with} \quad 0 \leq m < l.
    \]
    Following the same reasoning as in Lemma \ref{lemma:l_p|p-1}, 
    we can deduce that \( m = 0 \).  
    Thus, \( l \mid n \) holds.
    \end{proof}

\begin{lemma}
    Let \( n \in \mathbb{N} \) and \( p \in \mathbb{P} \). There exists an integer \( m \) satisfying \( 0 \leq m < n - 1 \) such that
    \[
    l_{p^n} = l_p \cdot p^m.
    \]
\label{lemma:l_p^n=l_pp^m}\end{lemma}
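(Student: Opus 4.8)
The plan is to identify $l_{p^k}$ with the multiplicative order of $10$ modulo $p^k$ and then control how that order grows as $k$ increases. Dispose of $p=2$ and $p=5$ first: for these primes $1/p^k$ is a terminating decimal for every $k$, so $l_{p^k}=1=l_p\cdot p^0$ by Definition \ref{def:l_p}, and the claim holds with $m=0$. So assume $p\notin\{2,5\}$; then $\gcd(10,p^k)=1$ for all $k$, the set $A_{p^k}$ of Definition \ref{def:A_p} is non-empty (it contains $\varphi(p^k)$ by Euler's theorem), and $l_{p^k}=\min A_{p^k}$ by \eqref{eq:minA_p}.

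The argument rests on two divisibilities relating consecutive powers. First, $10^{l_{p^k}}\equiv 1\pmod{p^k}$ implies $10^{l_{p^k}}\equiv 1\pmod{p^{k-1}}$, so $l_{p^k}\in A_{p^{k-1}}$ and hence $l_{p^{k-1}}=\min A_{p^{k-1}}\mid l_{p^k}$ by Lemma \ref{lemma:minA_p|n}. Second, for the reverse direction write $10^{l_{p^{k-1}}}=1+t\,p^{k-1}$ with $t\in\mathbb{Z}$ and expand
\[
10^{p\,l_{p^{k-1}}}=(1+t\,p^{k-1})^p=1+t\,p^k+\sum_{j=2}^{p}\binom{p}{j}t^j p^{j(k-1)}.
\]
For $k\ge 2$ every term after the leading $1$ is divisible by $p^k$: the $j=1$ term is exactly $t\,p^k$; for $2\le j\le p-1$ the prime $p$ divides $\binom{p}{j}$, so that term has $p$-adic valuation at least $1+j(k-1)\ge k$; and the $j=p$ term $t^p p^{p(k-1)}$ has valuation at least $p(k-1)\ge k$. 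Therefore $p\,l_{p^{k-1}}\in A_{p^k}$, whence $l_{p^k}=\min A_{p^k}\mid p\,l_{p^{k-1}}$.

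Combining the two, $l_{p^{k-1}}\mid l_{p^k}\mid p\,l_{p^{k-1}}$, so the quotient $l_{p^k}/l_{p^{k-1}}$ divides the prime $p$ and thus equals $1$ or $p$. Running this for $k=2,\dots,n$ starting from $l_{p^1}=l_p=l_p\cdot p^0$, an easy induction yields $l_{p^n}=l_p\cdot p^m$, where $m$ is the number of indices $k\in\{2,\dots,n\}$ at which the order genuinely picked up a factor of $p$; since there are only $n-1$ such indices, $0\le m\le n-1$, which is the stated bound.

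I expect the one spot needing care is the valuation estimate in the middle paragraph — a short lifting-the-exponent computation — together with checking that Lemma \ref{lemma:minA_p|n} and \eqref{eq:minA_p} may legitimately be applied with the prime-power modulus $p^k$, which is exactly why $10$ must be a unit mod $p^k$ and hence why $p=2,5$ are removed at the start. Nothing here is deep, but the binomial bookkeeping (in particular the need for $k\ge 2$ in the $j=p$ term, which is also why $p=2$ is best handled by the separate terminating-decimal argument rather than the expansion) is where a slip is most likely.
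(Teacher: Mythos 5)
Your proof is correct and takes essentially the same route as the paper's: the two divisibilities \( l_{p^{k-1}} \mid l_{p^k} \) (by reducing \( 10^{l_{p^k}}\equiv 1 \pmod{p^k} \) modulo \( p^{k-1} \)) and \( l_{p^k} \mid p\,l_{p^{k-1}} \) (by expanding \( (1+tp^{k-1})^p \) binomially), hence ratio \( 1 \) or \( p \) at each step, followed by induction, with \( p=2,5 \) handled separately as terminating decimals — your write-up just does the valuation bookkeeping more carefully than the paper does. Note that, like the paper's own argument, you obtain \( 0\le m\le n-1 \) rather than the stated strict bound \( m<n-1 \); that strict bound is a slip in the statement itself (e.g.\ \( l_{7^2}=42=7\,l_7 \) forces \( m=1=n-1 \)), so your bound is the right one.
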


\begin{proof}
    If \( p = 2 \) or \( p = 5 \), then \( l_{p^n} = l_p = 1 \), and the lemma holds.  
    Now, consider the case where \( p \neq 2 \) and \( p \neq 5 \).
    Since \( l_p = \min A_p \), we have
    \[
    10^{l_p} \equiv 1 \pmod{p}.
    \]
    Therefore, there exists a non-negative integer \( k \) such that
    \begin{equation}
    10^{l_p} = pk + 1.
    \end{equation}

    Thus,  
    \[
    10^{pl_p} = (p k + 1)^p = \sum_{k'=1}^{m} \binom{p}{k} (p k)^{k'} + 1
    \]
    is obtained, where \( \binom{p}{1} = p \).  
    We also have  
    \[
    \sum_{k'=1}^{m} \binom{p}{k} (p k)^{k'} \equiv 0 \pmod{p^2}.
    \]
    Therefore,  
    \[
    10^{pl_p} \equiv 1 \pmod{p^2},
    \]
    and hence,  
    \begin{equation}
    pl_{p} \in A_{p^2}.
    \end{equation}

    Since  
    \[
    l_{p^2} = \min A_{p^2}.
    \]
    and Lemma \ref{lemma:minA_p|n},  we obtained
    \begin{equation}
    l_{p^2} \mid p l_p.
    \label{eq:l_p^2|pl_p}
    \end{equation}

    Additionally,  
    \begin{align}
    l_{p^2} \in A_{p^2} \Rightarrow& 10^{l_{p^2}} \equiv 1 \pmod{p^2}, \notag\\
    \Rightarrow& 10^{l_{p^2}} \equiv 1 \pmod{p}, \notag\\
    \Rightarrow& l_{p^2} \in A_p.
    \end{align}

    Thus (see Lemma \ref{lemma:minA_p|n}),  
    \begin{equation}
    l_p \mid l_{p^2}.
    \label{eq:l_p|l_p^2}\end{equation}

    From Equation (\ref{eq:l_p^2|pl_p}) and (\ref{eq:l_p|l_p^2}), we have  
    \begin{equation}
        l_{p^2} = l_p \text{ or } p l_p.
    \label{eq:l_p^2}\end{equation}
    
    Similarly, considering \( l_{p^k} = \min A_{p^k} \) for a natural number \(k\), we obtain  
    \[
    \begin{split}
        &p l_{p^k} \in A_{pk+1}, \\
        &l_{p^{k+1}} \in A_{pk}, \\
        &l_{p^{k+1}} = \min A_{p^{k+1}}.
    \end{split}
    \]
   
    Thus, \( l_{p^{k+1}} \) is both a divisor of \( p l_{p^k} \) and 
    a multiple of \(l_{p^k} \).  
    This implies that  
    \begin{equation}
    l_{p^{k+1}} = l_{p^k} \text{ or } p l_{p^k}.
    \label{eq:l_p^k+1}\end{equation}
    By using mathematical induction, the lemma is proved.   
\end{proof} 

Moreover, the following lemma holds:
\begin{lemma}
    For natural numbers \( n < m \), the relation  
    \[
    l_{p^n} \mid l_{p^m}
    \]
    holds.
\end{lemma}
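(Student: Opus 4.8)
The plan is to reduce the statement to the fact, already proved in Lemma~\ref{lemma:minA_p|n}, that $\min A_{p^n}$ divides every element of $A_{p^n}$. First I would dispose of the degenerate cases $p = 2$ and $p = 5$: here $1/p^k$ terminates for every $k$, so $l_{p^n} = l_{p^m} = 1$ by Definition~\ref{def:l_p}, and the divisibility is trivial. For the remaining primes $p \notin \{2,5\}$ one has $\gcd(10, p^k) = 1$ for all $k$, so $A_{p^k} \neq \emptyset$ and $l_{p^k} = \min A_{p^k}$, which is what lets us apply Lemma~\ref{lemma:minA_p|n} to prime-power moduli.

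The core step is a one-line congruence chase. Since $n < m$ we have $p^n \mid p^m$, so the relation $10^{l_{p^m}} \equiv 1 \pmod{p^m}$ — which holds because $l_{p^m} \in A_{p^m}$ — reduces modulo $p^n$ to $10^{l_{p^m}} \equiv 1 \pmod{p^n}$. Hence $l_{p^m} \in A_{p^n}$. Applying Lemma~\ref{lemma:minA_p|n} to the set $A_{p^n}$ and its element $l_{p^m}$ gives $\min A_{p^n} \mid l_{p^m}$, that is, $l_{p^n} \mid l_{p^m}$, as required.

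An alternative route — essentially what the preceding lemmas already package — is to argue purely from the recursion $l_{p^{k+1}} \in \{\, l_{p^k},\ p\, l_{p^k}\,\}$ established in Equation~(\ref{eq:l_p^k+1}) (or equivalently from Lemma~\ref{lemma:l_p^n=l_pp^m}), which shows $l_{p^k} \mid l_{p^{k+1}}$ for every $k$; chaining these divisibilities from $k = n$ up to $k = m-1$ and using transitivity of $\mid$ then yields $l_{p^n} \mid l_{p^m}$.

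I do not anticipate a real obstacle: the statement is a short corollary of Lemma~\ref{lemma:minA_p|n}. The only point needing a word of care is the identity $l_{p^k} = \min A_{p^k}$ for prime powers — Equation~(\ref{eq:minA_p}) is used repeatedly for moduli $p^k$ throughout Section~3, so invoking it here is consistent with the rest of the paper, but it genuinely requires $\gcd(10, p^k) = 1$, which is precisely why the cases $p = 2$ and $p = 5$ must be separated out at the start.
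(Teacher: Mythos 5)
Your proof is correct, and your primary argument takes a different route from the one the paper intends. The paper states this lemma without a separate proof, immediately after Lemma \ref{lemma:l_p^n=l_pp^m}: the intended justification is exactly your ``alternative route,'' namely chaining the recursion $l_{p^{k+1}} \in \{\, l_{p^k},\ p\,l_{p^k}\,\}$ of Equation (\ref{eq:l_p^k+1}) from $k=n$ up to $k=m-1$ and using transitivity of divisibility. Your main argument instead goes directly through the sets $A_{p^k}$: since $p^n \mid p^m$, the congruence $10^{l_{p^m}} \equiv 1 \pmod{p^m}$ reduces to the modulus $p^n$, so $l_{p^m} \in A_{p^n}$, and Lemma \ref{lemma:minA_p|n} gives $l_{p^n} \mid l_{p^m}$. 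This is cleaner and strictly more general --- it shows $l_a \mid l_b$ whenever $a \mid b$ (for moduli coprime to $10$), without any induction and without needing Lemma \ref{lemma:l_p^n=l_pp^m} at all --- whereas the paper's route gives the extra structural information that consecutive quotients $l_{p^{k+1}}/l_{p^k}$ are $1$ or $p$, which is what Theorem \ref{thm:2.1} actually exploits. Your explicit handling of $p=2,5$ (where $A_{p^k}=\emptyset$ and $l_{p^k}=1$ by Definition \ref{def:l_p}) is a point of care the paper glosses over, and it is needed precisely because Equation (\ref{eq:minA_p}) presupposes a repeating expansion.
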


As shown in Equation (\ref{eq:l_p^2}),  
\( l_{p^2} \) is either \( l_p \) or \( p l_p \).  
We now consider whether \( l_{p^2} \) is \( l_p \) or \( p l_p \).  

First, let the repeating decimal period of \( 1/p \) be \( c_p \).
Naturally, we have  
\[
\frac{1}{p} = \frac{c_p}{99\ldots9} = \frac{c_p}{10^{l_p} - 1}.
\]
Thus,  
\[
\frac{1}{p^2} = \frac{c_p}{p(10^{l_p} - 1)}.
\]

At this point, if \( p \mid c_p \), we can write  
\begin{equation}
\frac{1}{p^2} = \frac{c_p / p}{10^{l_p} - 1},
\label{eq:1/p^2frac}
\end{equation}
which implies that \( l_{p^2} \leq l_p \).  
Additionally, from Equation (\ref{eq:l_p^2}), we have \( l_{p^2} = l_p \).

Conversely, if \( p \nmid c_p \), then in Equation (\ref{eq:1/p^2frac}), 
the numerator is not an integer.  
In this case, \( l_{p^2} \neq l_p \), and thus \( l_{p^2} = p l_p \).

Using this idea, we proceed to prove the following lemma.

\begin{lemma}
Let \( k \in \mathbb{N} \). The following results hold: 

(1)  
\[
l_{p^{k+1}} =
\begin{cases}
l_{p^k}, & \text{if } p \mid c_{p^k}, \\[2mm]
p \cdot l_{p^k}, & \text{if } p \nmid c_{p^k}.
\end{cases}
\]

(2) If \( p \nmid c_{p^k} \), then \( p \nmid c_{p^{k+1}} \).
\label{lemma:pn|c_p^k}\end{lemma}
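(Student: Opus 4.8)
\noindent\emph{Plan of proof.} The plan is to treat the primes $2$ and $5$ separately and then push everything else through the identity linking $c_{p^k}$ with $l_{p^k}$. If $p=2$ or $p=5$, then $1/p^{k}$ terminates for every $k$, so by Definition~\ref{def:c_p} we have $c_{p^k}=0$ and $l_{p^k}=1$; hence $p\mid c_{p^k}$ always, $l_{p^{k+1}}=l_{p^k}=1$, so (1) holds and (2) is vacuous. From now on assume $p\notin\{2,5\}$. The whole argument rests on the prime‑power version of the formula in Definition~\ref{def:c_p},
\[
10^{\,l_{p^k}}-1=p^{k}\,c_{p^k},
\]
and reading it modulo $p^{k+1}$ gives the clean equivalence
\[
p\mid c_{p^k}\iff p^{k+1}\mid 10^{\,l_{p^k}}-1\iff l_{p^k}\in A_{p^{k+1}}.
\]

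For part (1) I would argue directly from this equivalence. If $p\mid c_{p^k}$, then $l_{p^k}\in A_{p^{k+1}}$, hence $l_{p^{k+1}}=\min A_{p^{k+1}}\le l_{p^k}$; combined with the monotonicity $l_{p^k}\mid l_{p^{k+1}}$ recorded in the lemma above (the case $n=k$, $m=k+1$), this forces $l_{p^{k+1}}=l_{p^k}$. If instead $p\nmid c_{p^k}$, then $l_{p^k}\notin A_{p^{k+1}}$, so in particular $l_{p^{k+1}}\ne l_{p^k}$ --- otherwise $l_{p^k}=\min A_{p^{k+1}}$ would belong to $A_{p^{k+1}}$. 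Since Equation~(\ref{eq:l_p^k+1}) already narrows the options to $l_{p^{k+1}}=l_{p^k}$ or $l_{p^{k+1}}=p\,l_{p^k}$, the conclusion is $l_{p^{k+1}}=p\,l_{p^k}$. This is essentially the $k=1$ discussion preceding the lemma, rewritten so as to be uniform in $k$; the only ingredients are the displayed equivalence, the dichotomy~(\ref{eq:l_p^k+1}), and monotonicity of $l_{p^n}$ in $n$.

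For part (2), assume $p\nmid c_{p^k}$, so by part~(1) we have $l_{p^{k+1}}=p\,l_{p^k}$. Write $L=l_{p^k}$ and $c=c_{p^k}$, so that $10^{L}=1+p^{k}c$ with $p\nmid c$. Then
\[
p^{k+1}\,c_{p^{k+1}}=10^{\,pL}-1=(1+p^{k}c)^{p}-1=\sum_{j=1}^{p}\binom{p}{j}(p^{k}c)^{j},
\]
and the goal is to show that this integer has $p$-adic valuation exactly $k+1$. The valuation $k+1$ is attained by the $j=1$ term $p^{k+1}c$, and I would then verify that every term with $j\ge 2$ is divisible by a strictly larger power of $p$: for $2\le j\le p-1$ one uses $p\mid\binom{p}{j}$ to get valuation at least $1+2k$, while the $j=p$ term $(p^{k}c)^{p}$ has valuation $pk$; both $1+2k$ and $pk$ exceed $k+1$ since $k\ge 1$ and $p\ge 3$. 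Dividing through by $p^{k+1}$ then yields $p\nmid c_{p^{k+1}}$, which is (2). The main obstacle is this valuation bookkeeping --- essentially a Lifting-the-Exponent type computation --- where one must treat the three ranges of $j$ separately and, crucially, exploit that $p$ is odd: oddness is exactly what makes $\binom{p}{2}=\frac{p(p-1)}{2}$ divisible by $p$, and the sole prime where this fails, $p=2$, has already been set aside.
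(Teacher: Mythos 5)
Your proof is correct and takes essentially the same route as the paper: part (1) rests on the dichotomy $l_{p^{k+1}}\in\{l_{p^k},\,p\,l_{p^k}\}$ from the preceding lemma together with the observation that $p\mid c_{p^k}$ exactly when $10^{\,l_{p^k}}\equiv 1\pmod{p^{k+1}}$ (the paper phrases this as the numerator $c_{p^k}/p$ being or not being an integer), and part (2) is the same binomial expansion of $(1+p^k c_{p^k})^p-1$ divided by $p^{k+1}$, yielding $c_{p^{k+1}}\equiv c_{p^k}\pmod p$. Your explicit valuation bookkeeping and the separate handling of $p=2,5$ are a bit more careful than the paper's write-up, but the underlying argument is identical.
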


\begin{proof}
    (1) When \( p \mid C_{p^k} \), we have  
    \begin{equation}
    \frac{1}{p^{k+1}} = \frac{1}{p} \cdot \frac{C_{p^k}}{10^{l_{p^k}} - 1} = \frac{C_{p^k}/p}{10^{l_{p^k}} - 1}.
    \label{eq:1/p^k+1}
    \end{equation}  
    Here, the numerator is an integer, so it follows that  
    \[
    l_{p^{k+1}} \leq l_{p^k}.
    \]  
    Moreover, from Equation (\ref{eq:1/p^k+1}), we conclude that  
    \[
    l_{p^{k+1}} = l_{p^k}.
    \]
    
    Conversely, when \( p \nmid C_{p^k} \), the numerator in Equation (\ref{eq:1/p^k+1}) is not an integer. Therefore,  
    \[
    l_{p^{k+1}} \neq l_{p^k},
    \]  
    and from Equation (\ref{eq:1/p^k+1}), it follows that  
    \[
    l_{p^{k+1}} = p \cdot l_{p^k}.
    \]
    
    (2) From the relation  
    \begin{equation}
    \frac{1}{p^k} = \frac{C_{p^k}}{10^{l_{p^k}} - 1} \implies 10^{l_{p^k}} = p^k C_{p^k} + 1,
    \label{eq:10^l_p^k}
    \end{equation}  
    and  
    \begin{equation}
    \frac{1}{p^{k+1}} = \frac{C_{p^{k+1}}}{10^{l_{p^{k+1}}} - 1} \implies C_{p^{k+1}} = \frac{10^{l_{p^{k+1}}} - 1}{p^{k+1}},
    \label{eq:c_p^k+1}
    \end{equation}  
    we analyze the case where \( p \nmid C_{p^k} \).
    
    From (1), when \( p \nmid C_{p^k} \), we have  
    \[
    l_{p^{k+1}} = p \cdot l_{p^k}.
    \]  
    Thus,  
    \[
    10^{l_{p^{k+1}}} - 1 = 10^{p \cdot l_{p^k}} - 1 = (p^k C_{p^k} + 1)^p - 1.
    \]  
    Expanding the binomial expression, we get  
    \[
    10^{l_{p^{k+1}}} - 1 = \sum_{n=1}^{p} \binom{p}{n} (p^k C_{p^k})^n = \sum_{n=2}^{p} \binom{p}{n} p^{nk} C_{p^k}^n + p^{k+1} C_{p^k}.
    \]  
    
    Substituting this into Equation (\ref{eq:c_p^k+1}), we obtain  
    \[
    C_{p^{k+1}} = \frac{\sum_{n=2}^{p} \binom{p}{n} p^{nk - k - 1} C_{p^k}^n + C_{p^k}}{p^{k+1}} \equiv C_{p^k} \pmod{p}.
    \]  
    Thus, if \( p \nmid C_{p^k} \), it follows that \( p \nmid C_{p^{k+1}} \).
\end{proof}

Now, we proceed to prove the main Theorem \ref{thm:2.1}.

\begin{proof}[Proof of Theorem \ref{thm:2.1}]
We consider two cases for \( n \).

---

**Case 1: \( n \leq m_p + 1 \)**  

Since  
\[
C_p = \frac{10^{l_p} - 1}{p} = p^{m_p} \cdot k,
\]  
we have  
\[
\frac{1}{p^n} = \frac{C_p / p^{n-1}}{10^{l_p} - 1} = \frac{p^{m_p - n + 1} \cdot k}{10^{l_p} - 1}.
\]  

Here, \( n \leq m_p + 1 \implies m_p - n + 1 \geq 0 \), so the numerator is a natural number.  

Thus,  
\[
l_{p^n} \leq l_p,
\]
and  
\[
l_p \in A_{p^n},
\]
where \( l_{p^n} = \min A_{p^n} \) implies  
\begin{equation}
l_{p^n} \mid l_p.
\label{eq:l_p^n|l_p}
\end{equation}  

This follows from Lemma \ref{lemma:minA_p|n}. Furthermore, by Lemma \ref{lemma:l_p^n=l_pp^m}, we also have  
\begin{equation}
l_p \mid l_{p^n}.
\label{eq:l_p|l_p^n}
\end{equation}  

From Equations (\ref{eq:l_p^n|l_p}) and (\ref{eq:l_p|l_p^n}), it follows that  
\[
l_{p^n} = l_p.
\]  

Thus, Case 1 is proven.

---

**Case 2: \( n > m_p + 1 \)**  

We proceed to prove this case using mathematical induction.  

---

(i) **Base case: \( n = m_p + 1 \)**  

From the result of Case 1, we have  
\[
l_{p^{m_p + 1}} = l_p.
\]  

Moreover,  
\[
C_{p^{m_p + 1}} = \frac{10^{l_{p^{m_p + 1}}} - 1}{p^{m_p + 1}} = \frac{10^{l_p} - 1}{p^{m_p + 1}}.
\]  

Since  
\[
C_p = \frac{10^{l_p} - 1}{p},
\]  
we know that \( 10^{l_p} - 1 = p \cdot C_p \). Substituting this into the equation for \( C_{p^{m_p + 1}} \), we get  
\[
C_{p^{m_p + 1}} = \frac{p \cdot C_p}{p^{m_p + 1}} = \frac{p \cdot p^{m_p} \cdot k}{p^{m_p + 1}} = k.
\]  

Since \( p \nmid k \), it follows that  
\[
p \nmid C_{p^{m_p + 1}}.
\]

---

(ii) **Inductive step: Assume \( n > m_p + 1 \)**  

Assume that  
\[
l_{p^n} = p^{n - m_p - 1} \cdot l_p \quad \text{and} \quad p \nmid C_{p^n}.
\]  

By Lemma \ref{lemma:pn|c_p^k}, we have  
\[
l_{p^{n+1}} = p \cdot l_{p^n} = p \cdot p^{n - m_p - 1} \cdot l_p.
\]  

Furthermore, since \( p \nmid C_{p^{n+1}} \), the induction step is complete.  

---

Thus, Case 2 is proven.  

---

Since both cases have been proven, the result follows for all \( n \).  
\end{proof}

To prove Theorem \ref{thm:2.2}, we first prove the following lemma.

\begin{lemma}
\label{lemma:l12=lcm12}
Let \( p_1, p_2 \in \mathbb{P} \) be prime numbers and \(p_1\neq p_2\). Then, the following hold:
\[
l_{p_1^n p_2^m} = \lcm \{ l_{p_1^n}, l_{p_2^m} \}, 
\]
where \( n, m \in \mathbb{N}\).
\end{lemma}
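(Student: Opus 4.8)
The plan is to deduce the identity from two facts already in hand: for any $N$ with $\gcd(N,10)=1$ one has $l_N=\min A_N$ (equation (\ref{eq:minA_p}) and Definition \ref{def:A_p}), and, by Lemma \ref{lemma:minA_p|n} together with $(10^{l_N})^j\equiv 1\pmod N$, the set $A_N$ is precisely the set of positive multiples of $l_N$. First I would dispose of the cases in which $2$ or $5$ occurs among $p_1,p_2$. If, say, $p_1\in\{2,5\}$, then $l_{p_1^n}=1$ by Definition \ref{def:l_p}, so the right-hand side equals $l_{p_2^m}$ (or $1$ when both primes lie in $\{2,5\}$); on the left, a factor $2^a5^b$ in a denominator only lengthens the non-repeating pre-period of the decimal and leaves the repeating block unchanged, so $l_{2^a5^bM}=l_M$ whenever $\gcd(M,10)=1$. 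This reduces the lemma to the case $\gcd(p_1p_2,10)=1$.

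In that case, put $l_1=l_{p_1^n}$, $l_2=l_{p_2^m}$, $L=\lcm\{l_1,l_2\}$ and $N=p_1^np_2^m$. Since $p_1^n$ and $p_2^m$ are coprime, $10^k\equiv 1\pmod N$ holds if and only if $10^k\equiv 1\pmod{p_1^n}$ and $10^k\equiv 1\pmod{p_2^m}$, that is, $A_N=A_{p_1^n}\cap A_{p_2^m}$. By the description recalled above, $A_{p_1^n}$ is the set of positive multiples of $l_1$ and $A_{p_2^m}$ the set of positive multiples of $l_2$, so $A_N$ is the set of positive common multiples of $l_1$ and $l_2$, i.e.\ the set of positive multiples of $L$. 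Taking the minimum and invoking (\ref{eq:minA_p}) gives $l_N=\min A_N=L=\lcm\{l_{p_1^n},l_{p_2^m}\}$, as claimed.

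Readers who prefer to stay within the divisibility style used elsewhere in the paper can replace the last step as follows: from $l_1\mid L$ and $l_2\mid L$ one gets $10^L\equiv 1$ modulo $p_1^n$ and modulo $p_2^m$, hence modulo $N$ (coprime moduli), so $l_N\mid L$ by Lemma \ref{lemma:minA_p|n}; conversely, reducing $10^{l_N}\equiv 1\pmod N$ modulo $p_1^n$ and modulo $p_2^m$ yields $l_1\mid l_N$ and $l_2\mid l_N$, hence $L\mid l_N$; the two divisibilities force $l_N=L$.

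The one place where care is genuinely needed is the reduction step: the formula $l_N=\min A_N$ is valid only when $\gcd(N,10)=1$ (otherwise $A_N=\emptyset$), so the factors $2$ and $5$ must be split off and checked to affect only the pre-period before any modular counting is done. Once the denominator is coprime to $10$, the remainder is a short and routine application of the Chinese Remainder Theorem together with Lemma \ref{lemma:minA_p|n}; the general composite case (Theorem \ref{thm:2.2}) will then follow by iterating this two-factor statement.
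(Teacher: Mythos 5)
Your proof is correct and follows essentially the same route as the paper's: by coprimality of $p_1^n$ and $p_2^m$, $10^k\equiv 1\pmod{p_1^n p_2^m}$ holds iff it holds modulo each factor, and minimality together with Lemma \ref{lemma:minA_p|n} forces $l_{p_1^n p_2^m}=\lcm\{l_{p_1^n},l_{p_2^m}\}$. The only difference is that you explicitly split off the primes $2$ and $5$ (where $A_N$ is empty and a factor $2^a5^b$ only lengthens the pre-period), an edge case the paper's proof passes over silently; this is a refinement of the same argument rather than a different one.
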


\begin{proof}

Let \( n, m \in \mathbb{N} \). By definition, \( l_{p_1^n} \) and \( l_{p_2^m} \) are the smallest integers such that  
\[
10^{l_{p_1^n}} \equiv 1 \pmod{p_1^n} \quad \text{and} \quad 10^{l_{p_2^m}} \equiv 1 \pmod{p_2^m}.
\]  

For the product \( p_1^n p_2^m \), the period \( l_{p_1^n p_2^m} \) is the smallest integer such that  
\[
10^{l_{p_1^n p_2^m}} \equiv 1 \pmod{p_1^n p_2^m}.
\]  

Since \( p_1^n \) and \( p_2^m \) are coprime, this occurs if and only if  
\[
10^{l_{p_1^n p_2^m}} \equiv 1 \pmod{p_1^n} \quad \text{and} \quad 10^{l_{p_1^n p_2^m}} \equiv 1 \pmod{p_2^m}.
\]  

Therefore, \( l_{p_1^n p_2^m} \) must be a common multiple of \( l_{p_1^n} \) and \( l_{p_2^m} \). Since \( l_{p_1^n p_2^m} \) is the smallest such integer, it follows that  
\[
l_{p_1^n p_2^m} = \lcm \{ l_{p_1^n}, l_{p_2^m} \}.
\]

\end{proof}

\begin{proof}[Proof of Theorem \ref{thm:2.2}]
    Let \( p = \prod_{k=1}^n p_k^{m_k} \), where \( p_1, \ldots, p_n \) are distinct prime numbers.  
    We aim to prove that  
    \[
    l_p = \lcm\{l_{p_1^{m_1}}, \ldots, l_{p_n^{m_n}}\}.
    \]
    
    **Step 1: Base case for \( n = 2 \):**
    
    First, consider the case where \( p = p_1^{m_1} p_2^{m_2} \).  
    By Lemma \ref{lemma:l12=lcm12}, we know that  
    \[
    l_{p_1^{m_1} p_2^{m_2}} = \lcm\{l_{p_1^{m_1}}, l_{p_2^{m_2}}\}.
    \]  
    This directly follows from the coprimality of \( p_1^{m_1} \) and \( p_2^{m_2} \), and the fact that \( l_p \) is the smallest integer such that  
    \[
    10^{l_p} \equiv 1 \pmod{p_1^{m_1} p_2^{m_2}}.
    \]  
    
    Thus, the relation holds for \( n = 2 \):  
    \[
    l_p = \lcm\{l_{p_1^{m_1}}, l_{p_2^{m_2}}\}.
    \]
    
    ---
    
    **Step 2: Inductive step for \( n > 2 \):**
    
    Assume the theorem holds for \( n-1 \), i.e., for  
    \[
    p' = \prod_{k=1}^{n-1} p_k^{m_k},
    \]
    we have  
    \[
    l_{p'} = \lcm\{l_{p_1^{m_1}}, \ldots, l_{p_{n-1}^{m_{n-1}}}\}.
    \]
    
    Now, consider \( p = p' \cdot p_n^{m_n} \).  
    By Lemma \ref{lemma:l12=lcm12}, since \( p' \) and \( p_n^{m_n} \) are coprime, we have  
    \[
    l_p = \lcm\{l_{p'}, l_{p_n^{m_n}}\}.
    \]
    
    Substituting the inductive hypothesis \( l_{p'} = \lcm\{l_{p_1^{m_1}}, \ldots, l_{p_{n-1}^{m_{n-1}}}\} \), we get  
    \[
    l_p = \lcm\{l_{p_1^{m_1}}, \ldots, l_{p_{n-1}^{m_{n-1}}}, l_{p_n^{m_n}}\}.
    \]
    
    This completes the induction.
    
    ---
    
    **Step 3: Conclusion**
    
    By induction, the theorem holds for all \( n \geq 2 \). Therefore,  
    \[
    l_p = \lcm\{l_{p_1^{m_1}}, \ldots, l_{p_n^{m_n}}\}.
    \]
\end{proof}

\section{Practical Examples}

In this paper, we presented important theorems regarding the length of repeating decimals. 
Here, we apply Theorems 2.1 and 2.2 to calculate the actual repeating decimal length $l_p$ for several numbers, 
demonstrating the practical utility of our theory.

\subsection{Calculating Repeating Decimal Lengths for Prime Numbers}

First, 
we directly calculate the repeating decimal length $l_p$ for basic prime numbers.

\begin{itemize}
    \item Calculation of $l_7$:\\
    $10^1 \bmod 7 = 3$\\
    $10^2 \bmod 7 = 2$\\
    $10^3 \bmod 7 = 6$\\
    $10^4 \bmod 7 = 4$\\
    $10^5 \bmod 7 = 5$\\
    $10^6 \bmod 7 = 1$\\
    Therefore, $l_7 = 6$. 
    This is consistent with the fact that $\frac{1}{7} = 0.\overline{142857}$ has a 6-digit repeating part and $6 | (7-1)$.
    
    \item Calculation of $l_{11}$:\\
    $10^1 \bmod 11 = 10$\\
    $10^2 \bmod 11 = 1$\\
    Therefore, $l_{11} = 2$. 
    This is consistent with the fact that $\frac{1}{11} = 0.\overline{09}$ has a 2-digit repeating part and $2 | (11-1)$.
    
    \item Calculation of $l_{13}$:\\
    $10^1 \bmod 13 = 10$\\
    $10^2 \bmod 13 = 9$\\
    $10^3 \bmod 13 = 12$\\
    $10^4 \bmod 13 = 3$\\
    $10^5 \bmod 13 = 4$\\
    $10^6 \bmod 13 = 1$\\
    Therefore, $l_{13} = 6$. 
    This is consistent with the fact that $\frac{1}{13} = 0.\overline{076923}$ has a 6-digit repeating part and $6 | (13-1)$.
\end{itemize}

\subsection{Calculating Repeating Decimal Lengths for Prime Powers}

Next, we use Theorem 2.1 to calculate the repeating decimal length for powers of prime numbers. For this, we need to determine the value of $m_p$.

\begin{itemize}
    \item Calculation of $l_{49} = l_{7^2}$:\\
    First, we calculate $c_7$ (the repeating part of $\frac{1}{7}$): $c_7 = 142857$\\
    Let's check if $c_7$ is divisible by 7: $142857 \div 7 = 20408.142857...$\\
    Since 7 does not divide $c_7$ evenly, we have $c_7 = 7^0 \cdot 142857$, and $m_7 = 0$.\\
    By Theorem 2.1, since $n = 2$ and $m_7 = 0$, we have $n > m_7 + 1$ (since $2 > 0 + 1$), thus\\
    $l_{7^2} = 7^{2-(0+1)} \cdot l_7 = 7^1 \cdot 6 = 42$\\
    Indeed, $\frac{1}{49} = 0.\overline{020408163265306122448979591836734693877551}$, 
    which has a 42-digit repeating part.
    
    \item Calculation of $l_{27} = l_{3^3}$:\\
    $\frac{1}{3} = 0.\overline{3}$, so $c_3 = 3$\\
    $c_3 = 3 = 3^1 \cdot 1$, and $3 \nmid 1$, thus $m_3 = 1$\\
    By Theorem 2.1, since $n = 3$ and $m_3 = 1$, we check if $n > m_3 + 1$: $3 > 1 + 1$ is true\\
    Therefore $l_{3^3} = 3^{3-(1+1)} \cdot l_3 = 3^1 \cdot 1 = 3$\\
    Indeed, $\frac{1}{27} = 0.\overline{037}$, which has a 3-digit repeating part.
    
    \item Calculation of $l_{125} = l_{5^3}$:\\
    For powers of 5, we need to be careful as these are special cases.\\
    $\frac{1}{5} = 0.2$ is a terminating decimal.\\
    Similarly, $\frac{1}{125} = 0.008$ is also a terminating decimal.\\
    
    By our definition, for terminating decimals like $\frac{1}{5^n}$, we set $l_{5^n} = 1$.\\
    Therefore, $l_{125} = l_{5^3} = 1$.\\
    
    Note that the formula from Theorem 2.1 should be applied with care for powers of 2 and 5, 
    as these produce terminating rather than repeating decimals.
\end{itemize}
\subsection{Calculating Repeating Decimal Lengths for Composite Numbers}

Finally, 
we use Theorem 2.2 to calculate the repeating decimal length for composite numbers.

\begin{itemize}
    \item Calculation of $l_{42} = l_{2 \cdot 3 \cdot 7}$:\\
    $l_2 = 1$ (since $\frac{1}{2} = 0.5$ is a terminating decimal)\\
    $l_3 = 1$ (since $\frac{1}{3} = 0.\overline{3}$)\\
    $l_7 = 6$ (from our previous calculation)\\
    By Theorem 2.2, $l_{42} = \text{lcm}(l_2, l_3, l_7) = \text{lcm}(1, 1, 6) = 6$\\
    Indeed, $\frac{1}{42} = 0.0\overline{238095}$, which has a 6-digit repeating part.
    
    \item Calculation of $l_{90} = l_{2 \cdot 3^2 \cdot 5}$:\\
    $l_2 = 1$\\
    $l_{3^2} = 1$ (since $\frac{1}{9} = 0.\overline{1}$ has a repeating length of 1)\\
    $l_5 = 1$ (since $\frac{1}{5} = 0.2$ is a terminating decimal)\\
    By Theorem 2.2, $l_{90} = \text{lcm}(l_2, l_{3^2}, l_5) = \text{lcm}(1, 1, 1) = 1$\\
    Indeed, $\frac{1}{90} = 0.0\overline{1}$, which has a 1-digit repeating part.
    
    \item Calculation of $l_{2310} = l_{2 \cdot 3 \cdot 5 \cdot 7 \cdot 11}$:\\
    $l_2 = 1$, $l_3 = 1$, $l_5 = 1$, $l_7 = 6$, $l_{11} = 2$\\
    By Theorem 2.2, $l_{2310} = \text{lcm}(l_2, l_3, l_5, l_7, l_{11}) = \text{lcm}(1, 1, 1, 6, 2) = 6$
    Indeed, $\frac{1}{2310} = 0.0\overline{004329}$, which has a 6-digit repeating part.
\end{itemize}

From these calculations, 
we have demonstrated that Theorems 2.1 and 2.2 can be used to efficiently calculate the repeating decimal length for any natural number. 
This method offers a significant reduction in computational complexity compared to direct calculation, especially for large numbers.

\section{Conclusion and Discussions}

\subsection{Considerations on Computational Complexity Reduction}

According to the results of Theorems 2.1 and 2.2, 
if we calculate $m_p$ and $l_p$ in advance, 
the computational complexity for determining the length of the repeating decimal $l_n$ for any natural number $n$ 
becomes essentially equivalent to the complexity of factoring $n$, 
which significantly reduces computation compared to conventional methods.

Specifically, 
the traditional approach to finding the repeating decimal length of $n$ requires calculating the values of $10^k \bmod n$ 
sequentially for $k = 1, 2, 3, \ldots$ until finding the smallest $k$ such that $10^k \equiv 1 \pmod{n}$. 
This method requires $O(n)$ computational complexity in the worst case\cite{knuth1997art}. 
For large values of $n$, this calculation becomes extremely time-consuming.

However, 
Theorem 2.1 shows that for a prime power $p^n$, 
the repeating decimal length $l_{p^n}$ can be directly calculated from the repeating decimal length $l_p$ of the prime number $p$ itself and the value of $m_p$. 
Furthermore, Theorem 2.2 shows that for any composite number $n = \prod_{k=1}^r p_k^{m_k}$, 
the repeating decimal length is given by $l_n = \text{lcm}\{l_{p_1^{m_1}}, l_{p_2^{m_2}}, \ldots, l_{p_r^{m_r}}\}$.

By utilizing these theorems, the calculation procedure becomes:
\begin{enumerate}
\item Factor the number $n$ into its prime factorization $n = \prod_{k=1}^r p_k^{m_k}$
\item Reference the pre-calculated values of $m_{p_k}$ and $l_{p_k}$ for each prime $p_k$
\item Calculate the repeating decimal length $l_{p_k^{m_k}}$ for each $p_k^{m_k}$ using Theorem 2.1
\item Calculate the least common multiple of these values using Theorem 2.2 to obtain $l_n$
\end{enumerate}

The computational complexity of this method is primarily dominated by the factorization step and depends on the complexity of modern factorization algorithms. 
For example, 
trial division has complexity $O(\sqrt{n})$, 
Pollard's rho method has expected complexity $O(n^{1/4})$, 
and the number field sieve has complexity $L_n[1/3, c]$ (sub-exponential time)\cite{crandall2005prime}\cite{lenstra1993development}, 
where the notation $L_n[1/3, c]$ is defined as 
\[L_n[1/3, c] = \exp((c + o(1))(\log n)^{1/3} (\log \log n)^{2/3}).\]

Shoup\cite{shoup2009computational} and Bach\cite{bach1996algorithmic} have proposed efficient algorithms for calculating $l_p$ for prime numbers, 
and combining these can further reduce computational complexity. Additionally, Brent\cite{brent1999factorization} has conducted detailed analysis on the relationship between factorization and calculation of multiplicative functions.

Furthermore, 
values of $m_p$ and $l_p$ for small prime numbers can be pre-computed and tabulated, 
and indeed such tables appear in number theory textbooks like Hardy and Wright\cite{hardywright2008numbers} and Niven et al.\cite{niven1991numbers}. 
Wagon\cite{wagon1991mathematica} discusses in detail efficient methods for calculating these values and their patterns.

\begin{comment}
For example, 
when calculating the repeating decimal length of $n = 2^3 \cdot 3^2 \cdot 7$, 
the traditional method might require up to $O(n) = O(2^3 \cdot 3^2 \cdot 7) = O(504)$ computational steps, 
but using our method requires only prime factorization and the calculation of $\text{lcm}\{l_{2^3}, l_{3^2}, l_7\}$, 
substantially reducing the computational complexity.
\end{comment}

This reduction in computational complexity has important implications for various applications, 
including number theory research investigating the properties of repeating decimals for large numbers and efficiency improvements in cryptographic algorithms 
based on periodicity\cite{koblitz1994cryptography}\cite{odlyzko1995future}.

\subsection{Open Problems}

The results presented in this paper naturally lead to several interesting open problems. 
We highlight two particularly significant questions related to the repeating decimal length $l_p$ for prime numbers:

\subsubsection{Characterization of $l_p$ as a divisor of $p-1$}

From Lemma 2.1, we know that for any prime $p$ (where $p \neq 2, 5$), 
the repeating decimal length $l_p$ is a divisor of $p-1$. However, 
a complete characterization of which specific divisor of $p-1$ equals $l_p$ remains an open problem.

\begin{problem}
    Determine necessary and sufficient conditions for when $l_p$ equals a particular divisor $d$ of $p-1$.
     More specifically, find a function $f$ such that $l_p = f(p)$ that precisely identifies which divisor of $p-1$ is equal to $l_p$ for any given prime $p$.
\end{problem}

While certain patterns have been observed and specific cases have been characterized, 
a comprehensive theory that predicts $l_p$ directly from $p$ without performing modular calculations remains elusive. 
Such a characterization would significantly advance our understanding of repeating decimals and potentially lead to more efficient algorithms for calculating $l_p$.

\subsubsection{Infinitude of Primes with Full-Period Repeating Decimals}

A prime $p$ is said to have a full-period repeating decimal if $l_p = p-1$, 
which is the maximum possible length for the repeating decimal expansion of $\frac{1}{p}$. 
These primes are also known as full reptend primes or cyclic numbers.

\begin{problem}
    Determine whether there exist infinitely many primes $p$ such that $l_p = p-1$.
\end{problem}

This problem dates back to Gauss, 
who first investigated the properties of repeating decimals and their connection to primitive roots. 
Despite significant attention from mathematicians over the centuries, this question remains unsolved. 
Numerical evidence suggests that approximately 37.35\% of primes have this property, 
but a proof of the infinitude (or finiteness) of such primes has yet to be established.

This problem is connected to other famous unsolved problems in number theory, 
including questions about the distribution of primitive roots and the behavior of the multiplicative group modulo $p$.

\subsubsection{Problems Concerning the Value of $m_p$ in Theorem 2.1}

In Theorem 2.1, 
we introduced the parameter $m_p$ for prime numbers $p$, 
where $c_p = p^{m_p} \cdot k$ (with $p \nmid k$). 
Interestingly, it is known that among primes $p < 10^{12}$, 
only three primes—3, 487, and 56598313—satisfy $m_p > 0$\cite{oeis}. 
Furthermore, for all these primes, $m_p = 1$.

These observations naturally lead to the following two open problems:

\begin{problem}
    Are there infinitely many primes $p$ such that $m_p > 0$?
\end{problem}

\begin{problem}
    Does there exist any prime $p$ such that $m_p \geq 2$?
\end{problem}

Resolving these problems would deepen our understanding of the structure of repeating decimals and potentially provide new insights in number theory.

%\acknow{This work was supported in part by funding from the National Natural Science Foundation of China (Grant No. 62406191). We also thank Ye Tao (Graduate School of Information Science and Technology, The University of Tokyo, tydus@wide.ad.jp) for his contribution to Section 5.2.3 of the Discussion.}

%\showacknow{}

%\begin{comment}
%\end{comment}

%\textbf{Acknowledgements}~~
%\addcontentsline{toc}{section}{Acknowledgements}
%The author wishes to express his sincere thanks to Professor H. Kitada for his encouraging and stimulating discussions with him, 
%and thanks to Professor T. Umeda for his encouraging commucations. 
%The author also wishes to express his sincere thanks to his family for their love. 
%The author would also like to thank Mr. P. Masurel and Mr. J. Le Roux for translating the abstract into French.

%\bibsplit[7]

\end{document}